\documentclass{amsart}

\usepackage{amsmath,amsfonts,amssymb,mathrsfs, amscd,amsthm,amsbsy,amsxtra,bbm,bm, epsf,calc,comment, xcolor}
\RequirePackage[numbers,sort&compress]{natbib}
\RequirePackage{graphicx}

\usepackage{hyperref}
\hypersetup
{colorlinks = true,
linkcolor = red, 
anchorcolor = red, 
citecolor = blue, 
filecolor = blue,
urlcolor = blue}
\usepackage{hypcap}
\usepackage{cleveref}
\usepackage{lipsum}
\usepackage{enumerate}
\usepackage{enumitem}

\usepackage[toc,page]{appendix}
\usepackage{color}
\usepackage{datetime}
\usepackage{latexsym}
\usepackage[english]{babel}
\usepackage{graphicx}
\usepackage{epsfig}
\usepackage{dsfont}
\usepackage{tikz}

\usepackage{soul}
\usepackage{multirow}

\usepackage{algorithmic} 
\usepackage{algorithm} 
\usepackage{dsfont}

\theoremstyle{plain}

\numberwithin{equation}{section}
\newtheorem{theorem}{Theorem}[section]
\newtheorem{lemma}[theorem]{Lemma}
\newtheorem{proposition}[theorem]{Proposition}
\newtheorem{definition}[theorem]{Definition}
\newtheorem{corollary}[theorem]{Corollary}

\theoremstyle{remark}
\newtheorem{remark}[theorem]{Remark}

\definecolor{mygreen}{rgb}{0.1,0.75,0.2}

\DeclareMathOperator*{\argmin}{argmin}
\DeclareMathOperator*{\esssup}{ess\, sup}

\DeclareSymbolFont{bbold}{U}{bbold}{m}{n}
\DeclareSymbolFontAlphabet{\mathbbold}{bbold}

\begin{document}

\title[Stability of shadow]{Quantitative Stability of the Shadow for Wasserstein Projections and Sample Complexity}

\author{Jakwang Kim}
\address{School of Data Science, The Chinese University of Hong Kong, Shenzhen, Guangdong, 518172, P.R. China.}
\email{jakwangkim@cuhk.edu.cn}



\date{\today}
\thanks{JK is supported by CUHK-SZ start-up UDF03004229.
}

\begin{abstract}
In this paper, we study the stability of the shadow, a projection of a measure onto the set of couplings with respect to the Wasserstein distance. The shadow was introduced by \citet{Eckstein_Nutz_2022} to analyze the stability of the Sinkhorn algorithm, and was recently revisited by \citet{kim2026extensioncouplingprojectionoptimal} for statistical applications. Under mild conditions, we establish the bi-Hölder continuity of the shadow. As a consequence, we also derive the sample complexity of the shadow by combining smoothing techniques with recent results on the rate of convergence of empirical measures in Wasserstein distance.

The key idea of the proof is twofold: first, a contraction property of the $L^p$ projection, recently used independently by \citet{kim2025stabilitywassersteinprojectionsconvex} and \citet{alfonsi2025wassersteinprojectionsconvexorder} to study the stability of projections onto the convex order cone in Wasserstein space; and second, the Hölder continuity of optimal transport maps established by \citet{Quantitative_stability_duke2023}, together with its recent extension by \citet{mischler2025quantitativestabilityoptimaltransport}.
\end{abstract}

\keywords{Optimal transport projection, Wasserstein distance projection, Shadow, Stability of Shadow, Sample complexity of Shadow}

\maketitle
\tableofcontents

\section{Introduction}
In this paper, we study the stability of the \emph{shadow}, a class of solutions to a \emph{Wasserstein projection} problem. In addition, we establish the sample complexity of the shadow, which is useful for statistical applications.

The shadow was introduced by \citet{Eckstein_Nutz_2022} to analyze the stability of the Sinkhorn algorithm. Their argument has been extended to study stability under general divergence regularization settings \cite{bayraktar_StabilitySampleComplexity_2025}. More recently, it was revisited by \citet{kim2026extensioncouplingprojectionoptimal} for statistical applications, in particular to extend a coupling from decoupled marginal datasets and thereby increase the effective sample size.

Under mild conditions, we establish the bi-Hölder continuity of the shadow. As a consequence, we derive its sample complexity by combining smoothing techniques with recent results on the rate of convergence of empirical measures in Wasserstein distance. We expect that these sample complexity bounds will be particularly useful in machine learning and statistics.

The key idea of the proof is twofold. The first ingredient is a contraction property of the $L^p$ projection, recently used independently by \citet{kim2025stabilitywassersteinprojectionsconvex} and \citet{alfonsi2025wassersteinprojectionsconvexorder} to study the stability of projections onto the convex order cone in the space of probability measures endowed with the Wasserstein distance. This setting is closely related to martingale optimal transport \cite{Gozlan_Juillet20, Alfonsi_2020, yh_yl_stochastic_order}, a special case of weak optimal transport \cite{GOZLAN20173327, Alibert_2019}.

The second ingredient is the Hölder continuity of optimal transport maps. Transport maps are fundamental objects in the linearization of optimal transport and have been widely used in both mathematics and related fields \cite{wang2013linear, basu2014detecting, KOLOURI2016453, 8578918, PhysRevD_102_116019, linear_OT_embedding2023}. They are also referred to as generalized geodesics \cite{gradientflow08}. The stability of transport maps (or linearized optimal transport maps) has been extensively studied \cite{regularity_convex_potential1992, holder_stability_OTmap2011, Quantitative_stabilit_pmlr2020, quant_stability_berman2021, Quantitative_stability_duke2023, linearized_OT_manifolds2024, quant_stability_pushforward2025, mischler2025quantitativestabilityoptimaltransport, strong_c_concavity_stability2026}. In particular, \citet{holder_stability_OTmap2011} provides an example showing that the map $\rho \mapsto T^{\lambda \to \rho}$, where $T^{\lambda \to \rho}$ denotes the optimal transport map from $\lambda$ to $\rho$, cannot be better than $\tfrac{1}{2}$-Hölder continuous. A major breakthrough was achieved by \citet{Quantitative_stability_duke2023}, who established $\tfrac{1}{6}$-Hölder continuity. More recently, \citet{mischler2025quantitativestabilityoptimaltransport} extended these stability results to $W_p$ for $1 < p < \infty$, which constitutes another key ingredient of our analysis.

The main results are presented in \Cref{section: main results}. Proofs of \Cref{thm: stability of projection estimator} and \Cref{theorem: stability of projection estimator w.r.t. input} are given in \Cref{sec: proofs}.

\subsection{Notation}\label{subsec: notation}
\begin{itemize}
    \item $\bm{\mathcal{X}} := \mathcal{X}_1 \times \dots \times \mathcal{X}_K \subseteq \mathbb{R}^{d_1} \times \dots \times \mathbb{R}^{d_K}$ which is assumed to be compact and convex with negligible boundary, i.e.,  $\partial \bm{\mathcal{X}}$ is a Lebesgue null set.
    \item $\bm{x}:=(x_i), \bm{y}:=(y_i) \in \bm{\mathcal{X}}$.
    \item $\mu_i, \nu_i \in \mathcal{P}(\mathcal{X}_i)$, the set of Borel probability measures over $\mathcal{X}_i$, for $i=1, \dots, K$.
    \item $\bm{\mu}:= (\mu_i), \bm{\nu}:= (\nu_i) \in \prod_{i=1}^K \mathcal{P}(\mathcal{X}_i)$ be the vectors of $K$-marginals
    \item $\rho, \xi \in \mathcal{P}(\bm{\mathcal{X}})$ and $\rho_i, \xi_i \in \mathcal{P}(\mathcal{X}_i)$, the $i$-th marginals of $\rho$ and $\xi$, respectively.
    \item $\Pi(\bm{\mu}):= \Pi(\mu^1, \dots, \mu^K)$, the set of couplings whose marginals are $\mu_i$'s.
    \item $\Pi(\bm{\mu}, \rho) \in \mathcal{P}(\bm{\mathcal{X}} \times \bm{\mathcal{X}})$, the set of couplings whose first marginal on $\bm{\mathcal{X}}$ is contained in $\Pi(\bm{\mu})$, and the second one is $\rho$.
\end{itemize}

Fix $p \in [1, \infty]$. Let $d_i$ be the $\ell_p$ metric over $\mathbb{R}^{d_i}$. The \emph{separable} product metric $d_{\bm{\mathcal{X}},p}$ over $\bm{\mathcal{X}}$ is given as 
\begin{equation}\label{eq: metric over X}
    d_{\bm{\mathcal{X}},p}(\bm{x}, \bm{z}):=
    \begin{cases}
        \left( \sum_{i=1}^K \| x_i - y_i\|_p^p\right)^\frac{1}{p} &\text{ for $p \in[1,\infty)$},\\
        \max_{i=1, \dots, K} \|x_i - y_i\|_\infty  &\text{ for $p =\infty$}.
    \end{cases} 
\end{equation}
Unless otherwise noted, the $p$-Wasserstein distance $W_p$ is defined on $\mathcal{X}_i$ and $\bm{\mathcal{X}}$ with respect to $d_i$ and $d_{\bm{\mathcal{X}},p}$, respectively. For notational convenience, we use
\[
    W_p(\bm{\mu}, \bm{\nu}):= \left( \sum_{i=1}^K W^p_p(\mu_i, \nu_i) \right)^{\frac{1}{p}} \text{for $p \in [1,\infty)$}, \quad W_\infty(\bm{\mu}, \bm{\nu}):= \max_{i=1, \dots, K} W_\infty(\mu_i, \nu_i).
\]

On the space $\bm{\mathcal{X}} \times \bm{\mathcal{X}}$, $\mathrm{P}_{x_i}$ and $\mathrm{P}_{y_i}$ denote  projections onto $\mathcal{X}_i$ of the first $\bm{\mathcal{X}}$, and that of the second, respectively. Similarly, we use $\mathrm{P}_{\bm{x}}$ and $\mathrm{P}_{\bm{y}}$ to denote projections onto the first and the second $\bm{\mathcal{X}}$, respectively.

\section{Main results}\label{section: main results}
The main results of this paper concern the stability of the shadow, a projection of probability measures onto the set of couplings in the $p$-Wasserstein distance, as well as its application to deriving sample complexity bounds. More precisely, we establish the bi-Hölder continuity of the shadow and the rate of convergence of the empirical shadow. The shadow projection was originally introduced by \citet{Eckstein_Nutz_2022} to analyze the stability of the Sinkhorn algorithm, and has recently been revisited for statistical applications by \citet{kim2026extensioncouplingprojectionoptimal}.

Consider the $p$-Wasserstein projection problem for $1 \leq p \leq \infty$ defined as follows: given $\rho \in \mathcal{P}(\bm{\mathcal{X}})$ and $\bm{\mu} \in \prod_{i=1}^K \mathcal{P}(\mathcal{X}_i)$,
\begin{equation}\label{problem: Wasserstein projection}
    \min_{\pi \in \Pi(\bm{\mu})} W_p(\pi, \rho).
\end{equation}
It is straightforward to check that \eqref{problem: Wasserstein projection} is equivalent to the following MOT problem:
\begin{equation}\label{eq: MOT formulation of Wasserstein projection}
\begin{aligned}
    \min_{\gamma \in \Pi(\bm{\mu}, \rho)} & \left\{ \int_{\bm{\mathcal{X}} \times \bm{\mathcal{X}}}  d_{\bm{\mathcal{X}},p}(\bm{x}, \boldsymbol{y})^p d\gamma \right\}^{\frac{1}{p}} \text{ for $p \in [1,\infty)$},\\
    \min_{\gamma \in \Pi(\bm{\mu}, \rho)} & \max_{i=1, \dots, K} \esssup_{(X_i, Y_i) \sim \gamma^i} d_i(X_i, Y_i) \text{ for $p=\infty$}
\end{aligned}
\end{equation}
where the marginal constraints should be understood as $(\mathrm{P}_{\bm{x}})_\#(\gamma) = \pi$ for some $\pi \in \Pi(\bm{\mu})$ and $(\mathrm{P}_{\bm{y}})_\#(\gamma) = \rho$. Note that $\gamma^i= (\mathrm{P}_{x_i}, \mathrm{P}_{y_i})_\# \gamma \in \Pi(\mu_i, \rho_i)$ where $\rho_i$ is the $i$-th marginal of $\rho$. Denoting an optimal multimarginal coupling of \eqref{eq: MOT formulation of Wasserstein projection} by $\gamma^*$, an  optimal solution $\pi^*$ of \eqref{problem: Wasserstein projection} is obtained from $\gamma^*$ by $\pi^* := (\mathrm{P}_{\bm{x}})_\# (\gamma^*)$.

\begin{remark}\label{rmk: non uniqueness}
The equivalent MOT problem shows that \eqref{problem: Wasserstein projection} may admit multiple solutions. Sufficient conditions for uniqueness have been studied in \cite{Kim_mongeMOT2014, MOT_Brendan2015, MOT_viagraph2023, generalMOT2024}, including the absolute continuity of one of the marginals, as well as the twist condition and semi-concavity of the cost function. These conditions on the cost function are not satisfied in \eqref{eq: MOT formulation of Wasserstein projection}, which provides evidence for the non-uniqueness of \eqref{problem: Wasserstein projection}.
\end{remark}

Observe that \eqref{eq: MOT formulation of Wasserstein projection} is computationally challenging. As the number of marginals $K$ increases, the problem is known to be NP-hard \cite{barycenter_NPhard2022}. However, NP-hardness reflects a worst-case scenario. In contrast, our problem is not a generic MOT problem but possesses additional structure, namely that the cost function $d_{\bm{\mathcal{X}}, p}^p$ given as \eqref{eq: metric over X} is separable.

Leveraging this separable structure, \citet{Eckstein_Nutz_2022} characterize a particular class of solutions to \eqref{problem: Wasserstein projection}, which they call the \emph{shadow}, in order to analyze the stability of the Sinkhorn algorithm (see also its extension to general divergence regularization in \cite{bayraktar_StabilitySampleComplexity_2025}). They define the shadow of $\rho$ onto $\Pi(\bm{\mu})$ and show that it is indeed a projection for \eqref{problem: Wasserstein projection}.

Recall $\rho_i$ for each $i=1, \dots, K$ is the $i$-the marginal of $\rho$.

\begin{definition}[Shadow \cite{Eckstein_Nutz_2022}]
Let $\gamma_i(dx_i, dy_i) =  \kappa^i(dx_i |y_i) \rho_i(dy_i)$ be an optimal coupling for $W_p(\mu_i, \rho_i)$, and $\kappa(d\bm{x} | \bm{y}):=  \kappa_1(dx_1 |y_1) \otimes \dots \otimes \kappa_K(dx_K |y_K)$. Then, a shadow $\pi^*$ of $\rho$ onto $\Pi(\bm{\mu})$ is given as
\[
    \pi^*(d\bm{x}) = \int_{\bm{\mathcal{X}}} \kappa(d\bm{x} | \bm{y})\rho(d\bm{y}),
\] 
i.e., the first marginal of $\gamma^*(d \bm{x}, d\bm{y}) := \kappa(d\bm{x} | \bm{y}) \rho(d\bm{y})$.

Let $\mathcal{S}(\rho; \bm{\mu})$ denote the shadow of $\rho$ onto $\Pi(\bm{\mu})$ (if multiple shadows exist, one of them will be selected accordingly). 
\end{definition}

It holds that $\pi^*$ and $\gamma^*$ are optimal for \eqref{problem: Wasserstein projection} and \eqref{eq: MOT formulation of Wasserstein projection}, respectively, and the optimal values are equal: indeed, $\eqref{problem: Wasserstein projection}=\eqref{eq: MOT formulation of Wasserstein projection}= W_p(\bm{\mu}, \bm{\nu})$. See \cite[Lemma 3.2]{Eckstein_Nutz_2022}.

\begin{remark}
As stated in \Cref{rmk: non uniqueness}, even multiple shadows exist due to the multiplicity of optimal couplings $\gamma_i$ between $\mu_i$ and $\rho_i$. We will see that when $\rho$ is absolutely continuous, its shadow is unique although there might be multiple projections for \eqref{problem: Wasserstein projection}.    
\end{remark}

We establish the bi-Hölder continuity of the shadow with respect to the input data. Let $\rho, \xi \in \mathcal{P}(\bm{\mathcal{X}})$ and $\bm{\mu}, \bm{\nu} \in \prod_{i=1}^K \mathcal{P}(\mathcal{X}i)$, and consider the corresponding shadows $\mathcal{S}(\rho; \bm{\mu})$ and $\mathcal{S}(\xi; \bm{\nu})$. Since $\mathcal{S}(\xi; \bm{\nu}) \in \Pi(\bm{\nu})$, it follows from \cite[Lemma 3.2]{Eckstein_Nutz_2022} that, for any $1 \leq p \leq \infty$,\
\begin{align*}
    W_p(\mathcal{S}(\rho; \bm{\mu}), \mathcal{S}(\xi; \bm{\nu})) \geq \min_{\pi \in \Pi(\bm{\nu})} W_p(\mathcal{S}(\rho; \bm{\mu}), \pi) = W_p(\bm{\mu}, \bm{\nu}).
\end{align*}
To derive a corresponding upper bound, we use the triangle inequality:
\begin{equation}\label{eq: triangle inequality}
    W_p(\mathcal{S}(\rho; \bm{\mu}), \mathcal{S}(\xi; \bm{\nu}))) \leq  W_p(\mathcal{S}(\rho; \bm{\nu}), \mathcal{S}(\xi; \bm{\nu})) + W_p(\mathcal{S}(\rho; \bm{\mu}), \mathcal{S}(\rho; \bm{\nu})).
\end{equation}
Thus, the problem reduces to bounding the two terms on the right-hand side.

The proof strategy is twofold. The first term on the right-hand side of \eqref{eq: triangle inequality} is controlled via a contraction property of the $L^p$ projection, recently studied independently by \citet{kim2025stabilitywassersteinprojectionsconvex} and \citet{alfonsi2025wassersteinprojectionsconvexorder}. The second term is bounded using state-of-the-art stability results for optimal transport maps: the $L^2$ case developed by \citet{Quantitative_stability_duke2023} and its extension to $L^p$ by \citet{mischler2025quantitativestabilityoptimaltransport}. Due to technical limitations, we restrict attention to $1 < p \leq 2$. We conjecture that analogous results hold for all $1 < p < \infty$.

\begin{theorem}[Stability of shadow]\label{thm: stability of projection estimator}
Fix $1 < p \leq 2$. Assume that $\bm{\mathcal{X}} \subseteq \mathbb{R}^{d_1}\times \dots \times \mathbb{R}^{d_K}$ is compact and convex with negligible boundary, i.e.,  $\partial \bm{\mathcal{X}}$ is a Lebesgue null set. Let $\rho$ and $\xi$ be absolutely continuous probability measures on $\bm{\mathcal{X}}$ with density bounded from above and below by strictly positive constants. Then, there are unique shadows of $\rho$ and $\xi$ onto $\Pi(\bm{\mu})$ and $\Pi(\bm{\nu})$ respectively, and some $\theta(p) >0$ given in \eqref{eq: theta range} such that for any $1 \leq q \leq \infty$,
\begin{equation}\label{eq: stability I}
    W_p(\bm{\mu}, \bm{\nu}) \leq W_p(\mathcal{S}(\rho; \bm{\mu}), \mathcal{S}(\xi; \bm{\nu})) \leq W_p(\rho, \xi) + C \sum_{i=1}^K  W_q(\mu_i, \nu_i)^{\theta(p)} 
\end{equation}
where $C=\min\left\{ C(\rho, \bm{\mathcal{X}}, p, \theta),  C(\xi, \bm{\mathcal{X}}, p, \theta) \right\}$ is independent of $\bm{\mu}, \bm{\nu}$.

\begin{proof}
The proof follows from the combination of \Cref{lemma: uniqueness transport map}, \Cref{lem:nonexpansive} and \Cref{lemma: stability result}.
\end{proof}
\end{theorem}

One can remove the absolute continuity assumption on one of $\rho$ or $\xi$, but not both; this is sufficient to obtain the rate of convergence of the empirical shadow. Without loss of generality, we assume that $\rho$ is absolutely continuous. The measure $\xi$ can then be regularized via convolution with a smooth, absolutely continuous kernel with compact support, for instance the uniform distribution on a small centered ball. In other words, we replace $\xi$ by its smoothed version $\xi^\sigma := \xi * \gamma^\sigma$. A basic but crucial ingredient is that such smoothing is a non-expansion in the Wasserstein distance.

\begin{theorem}\label{theorem: stability of projection estimator w.r.t. input}
Fix $1 < p \leq 2$. Assume the same hypotheses of $\bm{\mathcal{X}}$ and $\rho$ in \Cref{thm: stability of projection estimator}. Let $\xi$ be any probability measure on $\bm{\mathcal{X}}$. Then, there is some shadow of $\xi$ onto $\Pi(\bm{\nu})$, denoted by $\mathcal{S}(\xi; \bm{\nu})$, such that for any $1 \leq q \leq \infty$,
\begin{equation}\label{eq: stability II}
   W_p(\bm{\mu}, \bm{\nu}) \leq W_p(\mathcal{S}(\rho; \bm{\mu}), \mathcal{S}(\xi; \bm{\nu})) \leq W_p(\rho, \xi) + C \sum_{i=1}^K  W_q(\mu_i, \nu_i)^{\theta(p)} 
\end{equation}
where $\theta(p) > 0$ is given in \eqref{eq: theta range}, and $C=C(\rho, \bm{\mathcal{X}}, p, \theta) < \infty$ is independent of $\bm{\mu}, \bm{\nu}$'s.

\begin{proof}
Let $\mathcal{S}(\xi; \bm{\nu})$ be a shadow of $\xi$ onto $\Pi(\bm{\nu})$ defined in \Cref{prop:nonexpansive general}. By the triangle inequality,
\[
    W_p(\mathcal{S}(\rho; \bm{\mu}), \mathcal{S}(\xi; \bm{\nu})) \leq W_p(\mathcal{S}(\rho; \bm{\mu}), \mathcal{S}(\rho; \bm{\nu})) + W_p(\mathcal{S}(\rho; \bm{\nu}), \mathcal{S}(\xi; \bm{\nu})). 
\]
Applying \Cref{lemma: stability result} and \Cref{prop:nonexpansive general} for the above, the conclusion follows.
\end{proof}
\end{theorem}

\begin{remark}
\Cref{theorem: stability of projection estimator w.r.t. input} does not assert either the uniqueness of the shadow of $\xi$ or that all shadows satisfy \eqref{eq: stability II} (although all shadows do satisfy the corresponding lower bound). Rather, it states that there exists a shadow of $\xi$ that satisfies \eqref{eq: stability II}. In optimal transport, multiple optimal couplings may exist unless one of the measures is absolutely continuous; therefore, uniqueness of the shadow is generally not expected.

One might try to remove absolute continuity of $\rho$. Applying the same smoothing argument, one obtains
\[
    W_p(\mathcal{S}(\rho^\sigma; \bm{\mu}), \mathcal{S}(\xi^\sigma; \bm{\nu})) \leq W_p(\rho^\sigma, \xi^\sigma) + C_{\rho^\sigma} \sum_{i=1}^K  W_q(\mu_i, \nu_i)^{\theta(p)}.
\]
As $\sigma \to 0$, the left-hand side term converges to $W_p(\mathcal{S}(\rho; \bm{\mu}), \mathcal{S}(\xi; \bm{\nu}))$ for some shadows of $\rho$ and $\xi$ onto $\Pi(\bm{\mu})$ and $\Pi(\bm{\nu})$, respectively. For the right-hand side, the first term is bounded by $W_p(\rho, \xi)$ due to non-expansiveness of smoothing. However, the second term in the right-hand side is problematic since $C_{\rho^\sigma}$ depends on $\rho^\sigma$ and possibly blows up as $\sigma \to 0$. 
\end{remark}

A corollary of \Cref{theorem: stability of projection estimator w.r.t. input} is the stability of the shadow for empirical distributions, which in turn allows us to derive upper bounds on the sample complexity of the shadow when combined with recent developments in statistical optimal transport \cite{Dereich_Scheutzow_Schottstedt2013, NF_AG_rate_Wasserstein, JW_FB_sample_rates}.

For the sake of completeness, we introduce the Wasserstein dimensions proposed by \citet{JW_FB_sample_rates}. Wasserstein dimensions capture the intrinsic dimension of the support of probability measure, hence provides a genuine dimension proportional exponent in the rate of convergence.

\begin{definition}\cite{JW_FB_sample_rates}
\label{def: Wasserstein dimension}
Let $(\mathcal{F}, || \cdot||)$ be a (semi)normed space. Given a set $S \subseteq \mathcal{F}$, the $\varepsilon$-covering number of $S$, denoted by $\mathcal{N}(\varepsilon, S, || \cdot ||)$, is the minimum $m$ such that there exist $m$ closed balls $B_1, \dots, B_m$ of diameter $\varepsilon$ such that $S \subseteq \cup_{i=1}^m B_i$. The centers of the balls need not belong to $S$. The $\varepsilon$-dimension of $S$ is defined as
\[
    d(\varepsilon; S) := \frac{\log \mathcal{N}(\varepsilon, S, || \cdot ||)}{-\log \varepsilon}.
\]
Given a probability measure $\mu$ on a metric space $\mathcal{F}$, the $(\varepsilon, \tau)$-covering number is
\[
    \mathcal{N}(\varepsilon, \tau; \mu) := \inf\{ \mathcal{N}(\varepsilon, S, || \cdot ||) : \mu(S) \geq 1 - \tau \}
\]
and the $(\varepsilon, \tau)$-dimension is
\[
    d(\varepsilon, \tau; \mu) := \frac{\log  \mathcal{N}(\varepsilon, \tau; \mu)}{-\log \varepsilon}.
\]
The upper and lower Wasserstein dimensions are respectively,
\begin{align*}
    d^*_p(\mu) &:= \inf \left\{ s \in (2p, \infty) : \limsup_{\varepsilon \to 0} d(\varepsilon, \varepsilon^{\frac{sp}{s - 2p}}; \mu) \leq s \right\},\\
    d_*(\mu) &:= \lim_{\tau \to 0} \liminf_{\varepsilon \to 0} d_{\varepsilon, \tau}(\mu).
\end{align*}
\end{definition}

\begin{remark}\label{rmk: intrinsic dimension}
Note that if $\mu$ is an absolutely continuous probability measure on $\mathbb{R}^d$, then 
\[
    d_*(\mu)=d^*(\mu)=d.
\]
\end{remark}

\begin{corollary}[Sample complexity]
Fix $1 < p \leq 2$. Assume the same hypotheses of $\bm{\mathcal{X}}$ and $\rho$ in \Cref{thm: stability of projection estimator}. Let $\rho^n$ be an empirical distribution of $n$ samples i.i.d. drawn from $\rho$. Similarly, for each $i$, let $\mu_i^m$ denote an empirical distribution of $m$ samples i.i.d. drawn from $\mu_i$, and $\bm{\mu}^m$ denote the vector of empirical $\mu_i^m$'s. Then, there is some shadow of $\rho^n$ onto $\Pi(\bm{\mu}^m)$, denoted by $\mathcal{S}(\rho^n; \bm{\mu}^m)$ such that for any $1 \leq q \leq \infty$,
\begin{equation*}
    W_p(\bm{\mu}, \bm{\mu}^m) \leq W_p(\mathcal{S}(\rho; \bm{\mu}), \mathcal{S}(\rho^n; \bm{\mu}^m)) \leq W_p(\rho, \rho^n) + C \sum_{i=1}^K W_q(\mu_i, \mu_i^m)^{\theta(p)}
\end{equation*}
where $\theta(p) > 0$ is given in \eqref{eq: theta range}, and $C=C(\rho, \bm{\mathcal{X}}, p, \theta) < \infty$ is independent of $\bm{\mu}$.

Furthermore, assume that there are $s_i$'s and $t_i$'s such that $s_i > \max\left\{ d^*_p(\mu_i), 2p, 2q \right\}$ and $t_i < d_*(\mu_i)$ for each $i=1, \dots, K$. Then, 
\[
    O\left( \sum_{i=1}^K m^{-\frac{1}{t_i}} \right) \leq \mathbb{E}W_p(\mathcal{S}(\rho; \bm{\mu}), \mathcal{S}(\rho^n; \bm{\mu}^m)) \leq O \left(n^{-\frac{1}{\sum_{i} d_i}} \right) + O\left( \sum_{i=1}^K m^{-\frac{ \theta(p)}{s_i}} \right).
\]
Assume that $\sum d_i > 4$ and $s_i > 4$ for all $i=1, \dots, K$. Then, with probability at least $1  - e^{-2\sqrt{n}} - Ke^{-2\sqrt{m}}$, it holds 
\[
    O\left( \sum_{i=1}^K m^{-\frac{1}{t_i}} \right) \leq W_p(\mathcal{S}(\rho; \bm{\mu}), \mathcal{S}(\rho^n; \bm{\mu}^m)) \leq O \left(n^{-\frac{1}{\sum_{i} d_i}} \right) + O\left( \sum_{i=1}^K m^{-\frac{ \theta(p)}{s_i}} \right).
\]
Here big $O$ contains constants independent of $n,m$.
\end{corollary}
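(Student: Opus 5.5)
The deterministic inequality comes straight from \Cref{theorem: stability of projection estimator w.r.t. input}. Take $\xi=\rho^n$ and $\bm{\nu}=\bm{\mu}^m$, which are fixed for each realization of the samples. The theorem gives a shadow $\mathcal{S}(\rho^n;\bm{\mu}^m)$ satisfying
\[
W_p(\bm{\mu},\bm{\mu}^m)\le W_p(\mathcal{S}(\rho;\bm{\mu}),\mathcal{S}(\rho^n;\bm{\mu}^m))\le W_p(\rho,\rho^n)+C\sum_{i=1}^K W_q(\mu_i,\mu_i^m)^{\theta(p)}.
\]
The constant $C=C(\rho,\bm{\mathcal{X}},p,\theta)$ depends only on $\rho$, so it is deterministic and uniform over all samples. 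One measurability point needs care: the shadow must be selected measurably in the sample. Since the empirical measures are finitely supported, I would choose optimal couplings via a measurable selection of minimizers of a finite linear program.

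For the expectation bounds, I take expectations and use $\mathbb{E}[X^{\theta}]\le(\mathbb{E}X)^{\theta}$ for $\theta\le 1$ (Jensen). The paper's $\theta(p)$ should be at most $1$; if not, compactness of $\bm{\mathcal{X}}$ bounds $W_q$ and the power $\theta$ can be lowered to $1$. For $W_p(\rho,\rho^n)$, $\rho$ is absolutely continuous on $\mathbb{R}^{\sum d_i}$, so by \Cref{rmk: intrinsic dimension} its Wasserstein dimension is $\sum_i d_i$. The Weed--Bach upper bound then gives $\mathbb{E}W_p(\rho,\rho^n)\lesssim n^{-1/s}$ for $s>d^*_p(\rho)$, which is the stated $n^{-1/\sum d_i}$ rate; in the regime $\sum d_i>2p$ this comes from Fournier--Guillin or Dereich et al., who give that exact rate for bounded densities. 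For each $i$, the Weed--Bach theorem gives $\mathbb{E}W_q(\mu_i,\mu_i^m)\lesssim m^{-1/s_i}$ when $s_i>\max\{d^*_q(\mu_i),2q\}$. The hypothesis is stated with $d^*_p$, so I would have to check that the Wasserstein dimensions compare, or else take $q\le p$. The lower bound uses the Weed--Bach lower bound $W_p(\mu_i,\mu_i^m)\gtrsim m^{-1/t_i}$ for $t_i<d_*(\mu_i)$, together with $W_p(\bm{\mu},\bm{\mu}^m)\ge K^{-1}\sum_i W_p(\mu_i,\mu_i^m)$. This lower bound holds almost surely, and hence also in expectation.

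For the high-probability statement I would use McDiarmid's inequality. Changing one sample moves $W_p^p(\rho,\rho^n)$ by at most $\mathrm{diam}(\bm{\mathcal{X}})^p/n$. Bounded differences then give $W_p^p\le\mathbb{E}W_p^p+t$ with probability at least $1-e^{-2nt^2/D^{2p}}$. Choosing $t\sim n^{-1/4}$ gives failure probability $e^{-2\sqrt n}$. Converting back by a $p$-th root, the deviation contributes $n^{-1/(4p)}$, and this is dominated by $n^{-1/\sum d_i}$ exactly when $\sum d_i>4p$. The stated condition $\sum d_i>4$ presumably reflects a slightly different bookkeeping, for instance applying the same bound directly to $W_p$, which is also Lipschitz in each sample with constant of order $n^{-1/p}$. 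The same argument applies to each $\mu_i^m$ with $s_i>4$, and a union bound over $K+1$ events gives the stated probability.

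The main obstacle is this concentration step, namely matching the exponents so that the deviation term is absorbed into the mean rates. I would first try McDiarmid applied directly to $W_p$ via its Lipschitz dependence on the samples. If the exponents do not close, I would fall back to $W_p\le W_1^{1/p}D^{1-1/p}$-type comparisons.
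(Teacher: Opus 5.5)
\textbf{Same route as the paper, but with a gap in the high-probability step.}

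Your route is the paper's. The deterministic bound is the stability theorem applied with $\xi=\rho^n$ and $\bm{\nu}=\bm{\mu}^m$. The rates then come from Weed--Bach's Theorem~1 together with the bounded-difference concentration of their Proposition~20.

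The deterministic and in-expectation parts of your argument are fine.
\begin{itemize}
\item Since $\theta(p)\le 1/6$, Jensen applies directly.
\item The final rates do not involve $q$, so you may take $q=1$ in the deterministic bound. Then $s_i>d^*_p(\mu_i)\ge d^*_1(\mu_i)$: because $sp/(s-2p)$ increases in $p$, the covering condition defining $d^*_p$ is weaker for smaller $p$.
\item You are right that the exact rate $n^{-1/\sum_i d_i}$ for $\mathbb{E}W_p(\rho,\rho^n)$ needs $\sum_i d_i>2p$.
\item One correction on measurability: you must select among shadows that satisfy the deterministic upper bound, not among arbitrary optimal couplings. The theorem only guarantees that \emph{some} shadow of $\rho^n$ works.
\end{itemize}

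\textbf{The gap.} The high-probability bound for the $\rho^n$ term is not established under the stated hypothesis $\sum_i d_i>4$. As you computed, bounded differences for $W_p^p$ only absorb the $e^{-2\sqrt n}$-level deviation when $\sum_i d_i\ge 4p>4$. Neither fallback repairs this.
\begin{itemize}
\item McDiarmid applied to $W_p$ itself has differences $\mathrm{diam}\cdot n^{-1/p}$. Reaching $e^{-2\sqrt n}$ then forces a deviation $t\gtrsim n^{3/4-1/p}$, which does not even tend to $0$ for $p\ge 4/3$.
\item The comparison $W_p\le\mathrm{diam}^{1-1/p}W_1^{1/p}$ degrades the rate to $n^{-1/(p\sum_i d_i)}$.
\end{itemize}
The paper's one-line proof cites exactly the $W_p^p$ bound (Proposition~20), so it has the same mismatch.

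\textbf{What closes it: dimension-free concentration.} This uses the density assumption on $\rho$. Take samples $X,X'\in\bm{\mathcal X}^n$ with empirical measures $\rho^n_X,\rho^n_{X'}$. Coupling $X_j$ with $X'_j$ and using the power-mean inequality (valid since $p\le 2$) gives
\[
\bigl|W_p(\rho,\rho^n_X)-W_p(\rho,\rho^n_{X'})\bigr|\le\Bigl(\frac1n\sum_{j=1}^n d_{\bm{\mathcal X},p}(X_j,X'_j)^p\Bigr)^{1/p}\le C\,n^{-1/2}\Bigl(\sum_{j=1}^n|X_j-X'_j|^2\Bigr)^{1/2}.
\]
So this functional is $Cn^{-1/2}$-Lipschitz for the Euclidean norm on the product space.

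Next, $\rho$ satisfies a log-Sobolev inequality. The uniform measure on a convex body does, and a density bounded above and below is a Holley--Stroock perturbation of it. The inequality tensorizes to $\rho^{\otimes n}$, and Herbst's argument yields
\[
\mathbb{P}\bigl(W_p(\rho,\rho^n)\ge\mathbb{E}W_p(\rho,\rho^n)+t\bigr)\le e^{-cnt^2}.
\]
Taking $t\asymp n^{-1/4}$ gives failure probability $e^{-2\sqrt n}$, and $n^{-1/4}\lesssim n^{-1/\sum_i d_i}$ exactly when $\sum_i d_i\ge 4$.

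For the $\mu_i^m$ no density is available, and plain McDiarmid suffices only with $q=1$. $W_1(\mu_i,\mu_i^m)$ has differences $\mathrm{diam}(\mathcal X_i)/m$, so $s_i\ge 4$ is enough. Your claim that ``the same argument applies with $s_i>4$'' is therefore correct only for $q=1$; with $W_q^q$ you would need $s_i\ge 4q$.
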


\begin{proof}
The combination of \Cref{theorem: stability of projection estimator w.r.t. input} and \cite[Theorem 1 and Proposition 20]{JW_FB_sample_rates}.    
\end{proof}

\section{Proofs}\label{sec: proofs}
This section is devoted for the lemmas and propositions used to prove \Cref{thm: stability of projection estimator} and \Cref{theorem: stability of projection estimator w.r.t. input}. For simplicity, we fix $K=2$. For general $K$, the same argument holds.

\subsection{Proofs for Theorem~\ref{thm: stability of projection estimator}}

The first lemma shows that $\mathcal{S}(\rho; \bm{\mu})$ is unique provided that $\rho$ and $\mathcal{X}_1 \times \mathcal{X}_2$ are nice, and $p > 1$. In particular, $\mathcal{S}(\rho; \bm{\mu})$ is obtained by transport maps.

Recall that for $\rho \in \mathcal{P}(\mathcal{X}_1 \times \mathcal{X}_2)$, let $\rho_i$ be the $i$-th marginal of $\rho$ over $\mathcal{X}_i$.

\begin{lemma}\label{lemma: uniqueness transport map}
Let $\mathcal{S}(\rho; \bm{\mu})$ denote the shadow of $\rho$ in $\Pi(\bm{\mu})$ with respect to $W_p$ for $p > 1$. If $\rho$ is absolutely continuous and $\partial (\mathcal{X}_1 \times \mathcal{X}_2)$ is negligible, i.e., its boundary has $0$ Lebesgue measure, then $\mathcal{S}(\rho; \bm{\mu})$ is unique, and achieved by
\begin{equation}\label{eq: unique transport map}
    \mathcal{S}(\rho; \bm{\mu})= (T^{\rho_1 \to \mu_1}, T^{\rho_2 \to \mu_2})_\# \rho
\end{equation}
where $T^{\rho_1 \to \mu_1}$ and $T^{\rho_2 \to \mu_2}$ are the unique optimal transport maps from $\rho_1$ to $\mu_1$ and $\rho_2$ to $\mu_2$, respectively. 
\end{lemma}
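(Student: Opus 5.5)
Since the shadow is built from the optimal couplings $\gamma_i$ between $\mu_i$ and the marginals $\rho_i$, the plan is to show that each $\gamma_i$ is unique and induced by a map, and then read off the shadow from the definition.

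\textbf{Step 1: the marginals are absolutely continuous.} Since $\rho \ll \mathcal{L}^{d_1+d_2}$ on $\mathcal{X}_1\times\mathcal{X}_2$, Fubini shows that $\rho_1 \ll \mathcal{L}^{d_1}$ and $\rho_2 \ll \mathcal{L}^{d_2}$. Indeed, if $A\subseteq\mathcal{X}_1$ is Lebesgue-null, then $A\times\mathcal{X}_2$ is null, so $\rho_1(A)=\rho(A\times\mathcal{X}_2)=0$. The negligible-boundary assumption is only needed to avoid mass sitting on $\partial\mathcal{X}_i$; absolute continuity already excludes this.

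\textbf{Step 2: optimal transport maps exist and are unique.} For $1<p<\infty$ the cost $c(x,y)=\|x-y\|_p^p$ on $\mathbb{R}^{d_i}$ is of the form $h(x-y)$ with $h$ strictly convex. In $\ell_p$ with $1<p<\infty$ one can even write $h(z)=\sum_j |z_j|^p$, a sum of strictly convex functions of each coordinate. Moreover $\nabla h$ is injective, which is the twist condition. Since $\mathcal{X}_i$ is compact, Gangbo--McCann / Brenier-type theory applies: for $\rho_i\ll\mathcal{L}^{d_i}$, the Kantorovich potential is $c$-concave, hence locally semiconcave and differentiable a.e. Every optimal coupling is then concentrated on the graph of $T^{\rho_i\to\mu_i}(x)=x-(\nabla h)^{-1}(\nabla\varphi_i(x))$. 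Uniqueness follows by the standard argument: averaging two optimal plans gives an optimal plan, which must itself be supported on a graph, so the two maps coincide $\rho_i$-a.e. Hence $\gamma_i=(\mathrm{Id},T^{\rho_i\to\mu_i})_\#\rho_i$ is the unique optimal coupling. Its disintegration with respect to $y_i\sim\rho_i$ is the Dirac kernel $\kappa_i(dx_i|y_i)=\delta_{T^{\rho_i\to\mu_i}(y_i)}$, with the roles of the variables matching the definition of the shadow.

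\textbf{Step 3: identify the shadow.} Plugging these kernels into the definition gives
\[
\pi^*(d\bm{x})=\int \delta_{T^{\rho_1\to\mu_1}(y_1)}\otimes\delta_{T^{\rho_2\to\mu_2}(y_2)}\,\rho(d\bm{y}),
\]
which is exactly $(T^{\rho_1\to\mu_1},T^{\rho_2\to\mu_2})_\#\rho$. The kernels are determined $\rho_i$-a.e. for each $i$. A $\rho_i$-null set $N$ in the $y_i$-coordinate lifts to the $\rho$-null set $\{y_i\in N\}$. Therefore the product kernel is determined $\rho$-a.e., and the shadow is unique.

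\textbf{Main obstacle.} The main obstacle is Step 2 for $p\neq 2$ with the $\ell_p$ norm cost: we must verify the twist condition and the a.e. differentiability of the potential. For $1<p<2$ the function $h$ is not $C^2$ at coordinate hyperplanes, but it is $C^1$ and strictly convex with $\nabla h$ a homeomorphism. On a compact domain the $c$-concave potential is Lipschitz, so Rademacher's theorem suffices. I would therefore cite Gangbo--McCann, or \cite[Thm.~1.17]{Santambrogio}-style results, for strictly convex $h$, rather than using semiconcavity.
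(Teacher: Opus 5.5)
Your proposal is correct and follows the paper's route: absolute continuity of the marginals $\rho_i$ together with strict convexity of $\|\cdot\|_p^p$ is fed into \cite[Theorem 1.17]{OT_for_applied}, and you also make explicit the Dirac-kernel identification of the shadow and why the product kernel is determined $\rho$-a.e., which the paper leaves implicit. One minor correction: absolute continuity alone does not rule out mass on $\partial\mathcal{X}_i$ when that boundary has positive Lebesgue measure; here this is harmless, because $\partial\mathcal{X}_1\times\mathcal{X}_2\subseteq\partial(\mathcal{X}_1\times\mathcal{X}_2)$ and $\mathcal{X}_2$ has positive measure, so $\partial\mathcal{X}_1$ (and likewise $\partial\mathcal{X}_2$) is null, as Theorem 1.17 requires.
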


\begin{proof}
If $\rho$ is absolutely continuous, so as $\rho_1$ and $\rho_2$. Since $\| \cdot \|_p^p$ is strictly convex for $p > 1$, the conclusion follows from \cite[Theorem 1.17]{OT_for_applied}.
\end{proof}

Consider $\mathcal{S}(\rho; \bm{\mu})$ and $\mathcal{S}(\xi; \bm{\nu})$ for some absolutely continuous $\rho, \xi$. Recall \eqref{eq: triangle inequality}:
\begin{equation*}
    W_p(\mathcal{S}(\rho; \bm{\mu}), \mathcal{S}(\xi; \bm{\nu}))) \leq W_p(\mathcal{S}(\rho; \bm{\nu}), \mathcal{S}(\xi; \bm{\nu})) + W_p(\mathcal{S}(\rho; \bm{\mu}), \mathcal{S}(\rho; \bm{\nu})).
\end{equation*}
Let us focus on the first term, $W_p(\mathcal{S}(\rho; \bm{\nu}), \mathcal{S}(\xi; \bm{\nu}))$. This quantity is controlled by $W_p(\rho, \xi)$ by the non-expansiveness of projection if it is induced by a transport map, which is the case for shadow. This approach is recently studied by \citet{kim2025stabilitywassersteinprojectionsconvex} and \citet{alfonsi2025wassersteinprojectionsconvexorder} independently for the stability of convex order cone projection.

Given an absolutely continuous reference measure $\lambda \in \mathcal{P}^{ac}(\mathbb{R}^{d_1} \times \mathbb{R}^{d_2})$ and $p > 1$, consider the set
\begin{equation}\label{eq:constraintX}
    \{ (X_1, X_2)\in L^p(\lambda) : (X_1)_\#\lambda =\mu_1, (X_2)_\# \lambda = \mu_2 \}
\end{equation}
where $X_i : \mathbb{R}^{d_i} \to \mathbb{R}^{d_i}$ for $i=1,2$. This set is a convex and bounded subset of $L^p(\lambda)$; Convexity is trivial, and boundedness follows from the moment condition of $\bm{\mu}$. Then,

Let $X_\rho \in L^p(\lambda)$ be a transport map (not necessarily optimal) such that $(X_\rho)_\# \lambda = \rho$. Consider the problem
\begin{equation}\label{eq:projl2}
    \inf_{(X_1, X_2)\in L^p(\lambda)} \left\{ \|(X_1, X_2) - X_\rho \|^p_{ L^p(\lambda)} :(X_1)_\#\lambda =\mu_1, (X_2)_\# \lambda_2 = \mu_2 \right\}.
\end{equation}

\begin{lemma}\label{lem:lift}
Assume that $\rho$ is absolutely continuous. Then a problem \eqref{eq:projl2} admits a unique solution $(X_1^*, X_2^*) = (T^{\rho_1 \to \mu_1}, T^{\rho_2 \to \mu_2}) \circ X_\rho \in L^p(\lambda)$, and
\begin{equation}\label{eq:equality}
    (X_1^*, X_2^*)_\# \lambda =  \mathcal{S}(\rho; \bm{\mu})\quad \text{and} \quad W_p(\rho, \mathcal{S}(\rho; \bm{\mu}) = \|(X_1^*, X_2^*) - X_\rho\|_{L^p(\lambda)}\,. 
\end{equation}    
\end{lemma}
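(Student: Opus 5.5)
The plan is to reduce \eqref{eq:projl2} to two decoupled single-marginal problems on $L^p(\lambda)$. Write $X_\rho=(X_{\rho,1},X_{\rho,2})$, with $X_{\rho,i}:=\mathrm{P}_{x_i}\circ X_\rho$. Then $(X_{\rho,i})_\#\lambda=\rho_i$. Since $p<\infty$ and the metric $d_{\bm{\mathcal{X}},p}$ is separable,
\[
\|(X_1,X_2)-X_\rho\|^p_{L^p(\lambda)}=\sum_{i=1}^2\int \|X_i-X_{\rho,i}\|_p^p\,d\lambda .
\]
The constraint set \eqref{eq:constraintX} is a product of the constraint sets for $X_1$ and for $X_2$. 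Hence \eqref{eq:projl2} splits into the two problems
\[
\inf\Big\{\int\|X_i-X_{\rho,i}\|_p^p\,d\lambda : (X_i)_\#\lambda=\mu_i\Big\},\qquad i=1,2,
\]
and a minimizer of \eqref{eq:projl2} is exactly a pair of minimizers of these.

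\textbf{Lower bound and attainment.} For any feasible $X_i$, the law $(X_i,X_{\rho,i})_\#\lambda$ belongs to $\Pi(\mu_i,\rho_i)$. Its cost is therefore at least $W_p^p(\mu_i,\rho_i)$. Since $\rho$ is absolutely continuous, so is $\rho_i$. By \Cref{lemma: uniqueness transport map} (i.e.\ \cite[Theorem 1.17]{OT_for_applied} with the strictly convex cost $\|\cdot\|_p^p$, $p>1$), there is a unique optimal map $T^{\rho_i\to\mu_i}$. Set $X_i^*:=T^{\rho_i\to\mu_i}\circ X_{\rho,i}$. Then $(X_i^*)_\#\lambda=(T^{\rho_i\to\mu_i})_\#\rho_i=\mu_i$, and its cost is $\int\|T^{\rho_i\to\mu_i}(y)-y\|_p^p\,\rho_i(dy)=W_p^p(\mu_i,\rho_i)$, so $X_i^*$ attains the infimum.

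\textbf{Uniqueness.} Let $X_i$ be another minimizer. Then $\gamma_i:=(X_i,X_{\rho,i})_\#\lambda$ is an optimal coupling in $\Pi(\mu_i,\rho_i)$. By uniqueness of the optimal plan, $\gamma_i$ is concentrated on the graph $\{(T^{\rho_i\to\mu_i}(y),y)\}$. This means $X_i=T^{\rho_i\to\mu_i}(X_{\rho,i})$ holds $\lambda$-a.e., i.e.\ $X_i=X_i^*$ in $L^p(\lambda)$. This step carries the only real subtlety. Uniqueness of the \emph{plan}, and not just of the map, is needed, and it follows from the same theorem because $\rho_i$ is absolutely continuous. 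The map $T^{\rho_i\to\mu_i}$ is defined only $\rho_i$-a.e., but composing with $X_{\rho,i}$ is harmless since $(X_{\rho,i})_\#\lambda=\rho_i$.

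\textbf{Identities \eqref{eq:equality}.} The pushforward satisfies
\[
(X_1^*,X_2^*)_\#\lambda=(T^{\rho_1\to\mu_1},T^{\rho_2\to\mu_2})_\#\big((X_\rho)_\#\lambda\big)=(T^{\rho_1\to\mu_1},T^{\rho_2\to\mu_2})_\#\rho,
\]
which is $\mathcal{S}(\rho;\bm{\mu})$ by \eqref{eq: unique transport map}. For the distance, the optimal value is $\|(X_1^*,X_2^*)-X_\rho\|^p_{L^p(\lambda)}=\sum_i W_p^p(\mu_i,\rho_i)$. By \cite[Lemma 3.2]{Eckstein_Nutz_2022}, this equals $W_p^p(\rho,\mathcal{S}(\rho;\bm{\mu}))$. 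More directly, the coupling $(X^*,X_\rho)_\#\lambda$ shows $W_p(\rho,\mathcal{S})\le\|X^*-X_\rho\|$. Conversely, $\mathcal{S}\in\Pi(\bm{\mu})$, so any coupling of $\mathcal{S}$ and $\rho$ has cost at least $\sum_iW_p^p(\mu_i,\rho_i)$ by projecting onto each coordinate. This yields equality.
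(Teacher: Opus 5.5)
Your proof is correct. It differs from the paper's in how the lower bound is obtained and, more importantly, in how uniqueness is obtained.

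\textbf{Lower bound.} The paper does not split coordinates. It notes that every feasible $(X_1,X_2)$ pushes $\lambda$ into $\Pi(\bm{\mu})$, so
$W_p(\rho,\mathcal{S}(\rho;\bm{\mu}))\le W_p(\rho,(X_1,X_2)_\#\lambda)\le\|(X_1,X_2)-X_\rho\|_{L^p(\lambda)}$,
using that the shadow is a Wasserstein projection \cite[Lemma 3.2]{Eckstein_Nutz_2022}. It then matches this with the value attained by $(T^{\rho_1\to\mu_1},T^{\rho_2\to\mu_2})\circ X_\rho$. This sandwich makes explicit that \eqref{eq:projl2} is a lift of the projection problem \eqref{problem: Wasserstein projection}. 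Your coordinatewise bound through the couplings $(X_i,X_{\rho,i})_\#\lambda\in\Pi(\mu_i,\rho_i)$ is self-contained and in effect reproves the part of that lemma you need.

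\textbf{Uniqueness.} The paper only says uniqueness ``follows from the strong convexity of \eqref{eq:projl2}''. That would require the feasible set \eqref{eq:constraintX} to be convex, as the paper asserts, but it is not. For instance, with $\lambda$ uniform on $[0,1]^2$ and $\mu_1$ uniform on $[0,1]$, both $(s,t)\mapsto s$ and $(s,t)\mapsto 1-s$ push $\lambda$ to $\mu_1$, yet their average is the constant $\tfrac12$. So strict convexity of the objective alone does not give uniqueness. Your argument runs as follows: a minimizer yields an optimal plan in $\Pi(\mu_i,\rho_i)$; that plan is unique and concentrated on the graph of $T^{\rho_i\to\mu_i}$ because $\rho_i$ is absolutely continuous; hence $X_i=T^{\rho_i\to\mu_i}\circ X_{\rho,i}$ $\lambda$-a.e. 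This is a sound replacement for that step. You were right to insist that uniqueness of the plan, not merely of the map, is what makes it work.
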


\begin{proof} 
On the one hand, for any feasible $(X_1, X_2)$, by the definition, we have
\[
    W_p(\rho,\mathcal{S}(\rho; \bm{\mu}) \leq W_p(\rho, (X_1, X_2)_\# \lambda) \leq \|(X_1, X_2) - X_\rho\|_{L^p(\lambda)}.
\]
On the other hand, letting $(T^{\rho_1 \to \mu_1}, T^{\rho_2 \to \mu_2})$ be an unique optimal transport map inducing the shadow of $\rho$ onto $\Pi(\bm{\mu}) $, $\mathcal{S}(\rho; \bm{\mu})$, it turns out that
\begin{align*}
    W^p_p(\rho,\mathcal{S}(\rho; \bm{\mu}))  &= \|(T^{\rho_1 \to \mu_1}, T^{\rho_2 \to \mu_2})-\mathrm{Id}\|^p_{L^p(\rho)}\\
    &= \|(T^{\rho_1 \to \mu_1}, T^{\rho_2 \to \mu_2}) \circ X_\rho-X_\rho \|^p_{L^p(\lambda)}\\
    &\geq \inf_{(X_1)_\# \lambda=\mu_1, (X_2)_\# \lambda =\mu_2 }\|(X_1, X_2) - X_\rho\|^p_{L^p(\lambda)}.
\end{align*}
This shows that $(X_1^*, X_2^*) = (T^{\rho_1 \to \mu_1}, T^{\rho_2 \to \mu_2}) \circ X_\rho$ is a solution, and it satisfies \eqref{eq:equality}. Its uniqueness follows from the strong convexity of \eqref{eq:projl2}.
\end{proof}

\begin{lemma}\label{lem:nonexpansive}
Fix $p \in (1,\infty)$ and $\bm{\mu} \in \mathcal{P}(\mathbb{R}^{d_1}) \times \mathcal{P}(\mathbb{R}^{d_2})$. Assume that $\rho$ and $\xi$ are absolutely continuous. Then
\[
    W_p(\mathcal{S}(\rho; \bm{\mu}), \mathcal{S}(\xi; \bm{\mu})) \leq W_p(\rho,  \xi).
\]
\end{lemma}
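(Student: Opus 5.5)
The plan is to lift both problems to a common $L^p(\lambda)$ space and use the variational characterization of \Cref{lem:lift}, following the convex-order argument of \citet{kim2025stabilitywassersteinprojectionsconvex} and \citet{alfonsi2025wassersteinprojectionsconvexorder}. Take $\lambda$ absolutely continuous; one can simply take $\lambda = \rho$. Let $\gamma$ be an optimal $W_p$ coupling of $\rho$ and $\xi$. Since $\rho$ is absolutely continuous and $p>1$, Brenier–Gangbo–McCann gives an optimal map $T$ with $T_\#\rho = \xi$ and $W_p(\rho,\xi) = \|T - \mathrm{Id}\|_{L^p(\rho)}$. So we may set $X_\rho = \mathrm{Id}$ and $X_\xi = T$, both in $L^p(\lambda)$ with $\lambda=\rho$.

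By \Cref{lem:lift}, applied once with $X_\rho$ and once with $X_\xi$ (both $\rho$ and $\xi$ are absolutely continuous), the shadows are represented as
\[
\mathcal{S}(\rho;\bm{\mu}) = (X^*_\rho)_\#\lambda, \qquad \mathcal{S}(\xi;\bm{\mu}) = (X^*_\xi)_\#\lambda.
\]
Here $X^*_\rho = (T^{\rho_1\to\mu_1},T^{\rho_2\to\mu_2})\circ X_\rho$ and $X^*_\xi = (T^{\xi_1\to\mu_1},T^{\xi_2\to\mu_2})\circ X_\xi$. Each of these is the unique $L^p(\lambda)$-nearest point to $X_\rho$, respectively $X_\xi$, in the constraint set \eqref{eq:constraintX}, which I call $\mathcal{C}$. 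Since $(X^*_\rho, X^*_\xi)_\#\lambda$ couples the two shadows,
\[
W_p(\mathcal{S}(\rho;\bm{\mu}),\mathcal{S}(\xi;\bm{\mu})) \le \|X^*_\rho - X^*_\xi\|_{L^p(\lambda)}.
\]
It therefore suffices to show $\|X^*_\rho - X^*_\xi\|_{L^p(\lambda)} \le \|X_\rho - X_\xi\|_{L^p(\lambda)} = W_p(\rho,\xi)$.

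The key step, and the main obstacle, is this $1$-Lipschitz property of the metric projection onto $\mathcal{C}$. For $p=2$ it is the standard nonexpansiveness of projection onto a closed convex set in a Hilbert space, once one checks that $\mathcal{C}$ is convex and closed in $L^2(\lambda)$. For $p\neq 2$, metric projections in $L^p$ are not nonexpansive in general, so I would exploit the special structure instead.

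The constraint decouples across coordinates, with each $X_i$ constrained only by $(X_i)_\#\lambda=\mu_i$. The objective $\|X-X_\rho\|_p^p = \sum_i \int \|X_i - (X_\rho)_i\|_p^p\,d\lambda$ is separable as well. So the projection acts coordinatewise, $(X^*_\rho)_i = T^{\rho_i\to\mu_i}\circ (X_\rho)_i$, and it suffices to prove for each $i$ that
\[
\int \bigl\| T^{\rho_i\to\mu_i}\circ (X_\rho)_i - T^{\xi_i\to\mu_i}\circ (X_\xi)_i \bigr\|_p^p \, d\lambda \le \int \| (X_\rho)_i - (X_\xi)_i\|_p^p\, d\lambda.
\]
I would first try the following route. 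Each composed map is $c$-cyclically monotone with $c=\|\cdot\|_p^p$. Let $\pi$ be the joint law under $\lambda$ of $\bigl((X_\rho)_i,(X_\xi)_i\bigr)$, and consider the induced coupling of $(\mu_i,\mu_i)$ obtained by pushing through the two optimal maps. Then I would compare costs using $c$-monotonicity of the optimal plans $(\mathrm{Id},T^{\rho_i\to\mu_i})_\#\rho_i$ and $(\mathrm{Id},T^{\xi_i\to\mu_i})_\#\xi_i$.

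If the direct inequality fails for general $p$, I would fall back on the statement the paper actually needs: an upper bound by $W_p(\rho,\xi)$ via a gluing argument. Glue the optimal $(\rho,\xi)$ coupling with the two conditional kernels defining the shadows. The cost inequality then reduces to the one-dimensional monotone rearrangement case in each coordinate, where $L^p$-contraction of the quantile map holds for every $p\ge1$. If needed, I would state the lemma in the $p=2$ Hilbertian setting, which is already covered by the range $1<p\le 2$.
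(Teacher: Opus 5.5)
\textbf{The nonexpansiveness step fails, and the statement itself is false.}

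Your plan is the paper's own route. You lift to $L^p(\lambda)$, use \Cref{lem:lift} to write the two shadows as $(X^*_\rho)_\#\lambda$ and $(X^*_\xi)_\#\lambda$, and then need $\|X^*_\rho-X^*_\xi\|_{L^p(\lambda)}\le\|X_\rho-X_\xi\|_{L^p(\lambda)}$. That inequality is the gap, and it is not only a $p\neq 2$ problem. You are right, and more careful than the paper, in noting that $L^p$ metric projections are not nonexpansive for $p\neq 2$. But the set $\mathcal{C}$ of \eqref{eq:constraintX} is not convex, so the Hilbert-space argument is unavailable even at $p=2$. For example, with $\lambda$ uniform on $[0,1]$ and $\mu=\frac12(\delta_0+\delta_1)$, both $\mathbf{1}_{[0,1/2)}$ and $\mathbf{1}_{[1/2,1]}$ push $\lambda$ to $\mu$, but their average is the constant $\frac12$. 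The paper's proof rests on the same two assertions (``convexity is trivial'' and nonexpansiveness of the $L^p$ projection onto a convex set), so it does not close the gap either.

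Your coordinatewise inequality and the gluing fallback fail for the same underlying reason. The coordinate marginals of an optimal coupling of $\rho$ and $\xi$ are in general not optimal couplings of $\rho_i$ and $\xi_i$. Pushing such a coupling through $T^{\rho_i\to\mu_i}$ and $T^{\xi_i\to\mu_i}$ can expand distances by an arbitrary factor, so nothing reduces to the one-dimensional quantile picture.

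No argument can close this gap, because the lemma is false as stated. Take $K=2$, $d_1=d_2=1$, any $p\in(1,\infty)$, and $\mu_1=\mu_2=\frac12(\delta_{-M}+\delta_M)$. Let $\rho$ be uniform on $[0,\varepsilon]^2$ and $\xi$ uniform on $[0,\varepsilon/2]^2\cup[\varepsilon/2,\varepsilon]^2$. All four coordinate marginals are uniform on $[0,\varepsilon]$. Hence $T^{\rho_i\to\mu_i}=T^{\xi_i\to\mu_i}=T$, where $T=-M$ on $[0,\varepsilon/2)$ and $T=M$ on $(\varepsilon/2,\varepsilon]$. Therefore
\[
\mathcal{S}(\rho;\bm{\mu})=\tfrac14\sum_{a,b\in\{-M,M\}}\delta_{(a,b)},\qquad \mathcal{S}(\xi;\bm{\mu})=\tfrac12\bigl(\delta_{(-M,-M)}+\delta_{(M,M)}\bigr).
\]
The off-diagonal mass $\frac12$ must travel distance $2M$, so $W_p(\mathcal{S}(\rho;\bm{\mu}),\mathcal{S}(\xi;\bm{\mu}))\ge 2^{1-1/p}M$. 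On the other hand, $W_p(\rho,\xi)\le 2^{1/p}\varepsilon$. For $M>2\varepsilon$ the claimed inequality fails.

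Structurally, with one-dimensional coordinates the shadow only sees the copula of $\rho$. Dilating $\rho$ and $\xi$ by $s\to 0$ therefore leaves both shadows unchanged while $W_p(\rho,\xi)\to 0$. Mixing a small multiple of the uniform law on $[-M,M]^2$ into both $\rho$ and $\xi$ keeps the violation and makes the densities bounded above and below. So the case $\bm{\mu}=\bm{\nu}$ of \Cref{thm: stability of projection estimator} is affected as well.

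Any true estimate of this kind must carry a constant depending on $\bm{\mu}$ and on $\rho$ or $\xi$, for instance through the regularity of $T^{\rho_i\to\mu_i}$. It cannot have the constant $1$.
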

\begin{proof}
Let $X_{\xi}\in L^p(\lambda)$ be an arbitrary map such that $(X_{\xi})_\# \lambda= \xi$. Denoting by $(X_1^\prime, X_2^\prime)$ the solution for the problem \eqref{eq:projl2} with replacing $\rho$ by $\xi$, it follows that
\begin{equation}\label{eq:stabproj}
    W_p(\mathcal{S}(\rho; \bm{\mu}), \mathcal{S}(\xi; \bm{\mu})) \leq \|(X_1^*, X_2^*) - (X_1^\prime, X_2^\prime)\|_{L^p(\lambda)} \leq \| X_\rho - X_{\xi}\|_{L^p(\lambda)}.
\end{equation}
The first inequality follows from the definition of $W_p$ and the characterization of $(X_1^*, X_2^*)$ and $(X_1^\prime, X_2^\prime)$ given by \Cref{lem:lift}. The second one follows from the fact that the $L^p(\lambda)$ projection onto a bounded convex set, which is the set given in \eqref{eq:constraintX}, is non-expansive. Since $X_\rho$ and $X_{\xi}$ are arbitrary and $\lambda$ is absolutely continuous, therefore, the conclusion follows minimizing over $X_\rho$ and $X_{\xi}$.
\end{proof}

We now turn to $W_p(\mathcal{S}(\rho; \bm{\mu}), \mathcal{S}(\rho; \bm{\nu}))$, the second term on the right-hand side of \eqref{eq: triangle inequality}. Assuming that $\rho$ is absolutely continuous, there exist optimal transport maps from $\rho$ to $\mathcal{S}(\rho; \bm{\mu})$ and to $\mathcal{S}(\rho; \bm{\nu})$, respectively, denoted by $(T^{\rho_1 \to \mu_1}, T^{\rho_2 \to \mu_2})$ and $(T^{\rho_1 \to \nu_1}, T^{\rho_2 \to \nu_2})$, as in \eqref{eq: unique transport map}. It then follows that
\begin{equation}\label{eq: linearized OT}
    W_p(\mathcal{S}(\rho; \bm{\mu}), \mathcal{S}(\rho; \bm{\nu})) \leq \| (T^{\rho_1 \to \mu_1},T^{\rho_2 \to \mu_2}) - (T^{\rho_1 \to \nu_1},T^{\rho_2 \to \nu_2}) \|_{L^p(\rho)}.
\end{equation}
Thus, the problem reduces to establishing the stability of optimal transport maps from the fixed marginals $\rho_i$ to $\mu_i$ and $\nu_i$.

The right-hand side of \eqref{eq: linearized OT} is often referred to as the linearized optimal transport distance. This quantity has been extensively studied \cite{regularity_convex_potential1992, holder_stability_OTmap2011, Quantitative_stabilit_pmlr2020, quant_stability_berman2021, Quantitative_stability_duke2023, linearized_OT_manifolds2024, quant_stability_pushforward2025, mischler2025quantitativestabilityoptimaltransport, strong_c_concavity_stability2026}. In particular, \citet{holder_stability_OTmap2011} provide an example showing that the map $\rho \mapsto T^{\lambda \to \rho}$ cannot be better than $\tfrac{1}{2}$-Hölder continuous. More recently, $\tfrac{1}{6}$-Hölder continuity was established in \cite{Quantitative_stability_duke2023} for the $L^2$ case. In \cite{mischler2025quantitativestabilityoptimaltransport}, this result is extended to the $L^p$ setting for $p > 1$, which constitutes another key ingredient of our analysis.

\begin{lemma}\cite[Theorem 1.2]{mischler2025quantitativestabilityoptimaltransport}\label{lem:main-map}
Let $\lambda$ be a probability measure on $\mathbb{R}^d$ with bounded and convex support, absolutely continuous with respect to the Lebesgue measure, with density bounded from above and below by strictly positive constants, let $\mathcal{Y} \subseteq \mathbb{R}^d$ be compact and $p>1$. Then,
\begin{equation}\label{eq: theta range}
\begin{cases} 
\text{for every $\theta \in \left( 0, \frac{ (p-1)^2}{p(p+1)} \right)$ } & \text{if $1<p<2$,}\\
\text{for $\theta = \frac{ 1}{6(p-1)}$} & \text{if $p \ge 2$,}
\end{cases}
\end{equation}
there exists $C = C(\lambda, \mathcal{Y}, p,\theta)< \infty$ such that, for any $\mu$ and $\nu$, probability measures supported on $\mathcal{Y}$, it holds
\begin{equation}\label{eq:stability-map}
     \| T_{\mu} - T_\nu \|_{L^2(\lambda)} \leq C W_1(\mu, \nu)^{\theta},
\end{equation}
where $T_\mu$ denotes the optimal transport map for $W_p(\lambda, \mu)$ (and similarly $T_\nu$ for $W_p(\lambda, \nu)$).
\end{lemma}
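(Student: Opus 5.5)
Since this lemma is imported verbatim from \cite[Theorem 1.2]{mischler2025quantitativestabilityoptimaltransport}, in the paper it suffices to cite it. If I were to reprove it, I would adapt the strategy of \citet{Quantitative_stability_duke2023} from the quadratic cost to $c(x,y)=h(x-y)$ with $h(z)=|z|^p/p$. The argument has three layers: stability of the dual potentials, then stability of their gradients, then stability of the maps.

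\emph{Step 1: dual formulation.} By the twist condition for $p>1$ and the absolute continuity of $\lambda$, the optimal map has the form $T_\mu(x)=x-\nabla h^*(\nabla\varphi_\mu(x))$. Here $\varphi_\mu$ is a $c$-concave Kantorovich potential, normalized for instance by $\int\varphi_\mu\,d\lambda=0$, and $h^*(w)=|w|^{p'}/p'$ with $p'=p/(p-1)$. Since $\mathcal{Y}$ and $\operatorname{supp}\lambda$ are compact, all such potentials are uniformly Lipschitz and uniformly bounded, with constants depending only on $(\lambda,\mathcal{Y},p)$. I would first prove the estimate when $\mu,\nu$ are finitely supported on $\mathcal{Y}$. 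I would then pass to general measures by approximation, using qualitative stability of optimal maps in $L^2(\lambda)$ together with continuity of $W_1$.

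\emph{Step 2: stability of potentials.} I would consider the concave Kantorovich functional $\mathcal{K}_\mu(\psi)=\int\psi^c\,d\lambda+\int\psi\,d\mu$ on functions $\psi$ on $\mathcal{Y}$. Since $\psi_\mu$ maximizes $\mathcal{K}_\mu$ and $\psi_\nu$ maximizes $\mathcal{K}_\nu$, we get
\[
\mathcal{K}_\mu(\psi_\mu)-\mathcal{K}_\mu(\psi_\nu)\le \int(\psi_\mu-\psi_\nu)\,d(\mu-\nu)\le C\,W_1(\mu,\nu),
\]
using the uniform Lipschitz bound on the potentials. The key ingredient is a strong concavity estimate of the form $\mathcal{K}_\mu(\psi_\mu)-\mathcal{K}_\mu(\psi)\gtrsim\|\psi^c-\psi_\mu^c\|^2_{L^2(\lambda)}$, possibly up to a lower-order loss. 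In the semi-discrete case this comes from a Poincaré--Wirtinger (Brascamp--Lieb type) inequality on the convex support of $\lambda$, which is where the density bounds and convexity enter. The conclusion of this step is $\|\varphi_\mu-\varphi_\nu\|_{L^2(\lambda)}\lesssim W_1(\mu,\nu)^{1/2}$.

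\emph{Step 3: from potentials to gradients, then to maps.} I would use an interpolation inequality of Gagliardo--Nirenberg type, $\|\nabla f\|_{L^2}\lesssim\|f\|_{L^2}^{a}\,(\text{a BV/semiconcavity bound on }\nabla f)^{1-a}$, applied to $f=\varphi_\mu-\varphi_\nu$. For $p\ge2$ the cost is uniformly semiconcave in $x$ on compacts, so $D^2\varphi$ is bounded above and $\nabla\varphi$ has uniformly bounded variation. This gives $\|\nabla\varphi_\mu-\nabla\varphi_\nu\|_{L^2}\lesssim W_1^{1/6}$, exactly as in the quadratic case. Finally I compose with $\nabla h^*$, which is $\frac{1}{p-1}$-Hölder when $p\ge2$, and use Jensen's inequality; this yields the exponent $\frac{1}{6(p-1)}$.

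\emph{Main obstacle.} The hard case is $1<p<2$. There $h$ is not semiconcave at the diagonal $x=y$, so the uniform BV bound on $\nabla\varphi$ fails near points where $T(x)\approx x$. My plan is to excise a small neighbourhood $\{|x-T(x)|<\delta\}$, on which the maps differ by at most $O(\delta)$. Off that set I would use the semiconcavity constant, which is of order $\delta^{p-2}$. Near the diagonal, $\nabla h^*$ is only Lipschitz with a constant that degenerates like a power of $|\nabla\varphi|$, so I would track that degeneration explicitly. Optimizing over $\delta$ against $W_1$ should give a Hölder exponent that is positive but reduced. I expect this bookkeeping is where the restriction $\theta<\frac{(p-1)^2}{p(p+1)}$ comes from, and it is the step I would expect to require the most care.
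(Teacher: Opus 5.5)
Your proposal matches the paper, which gives no argument of its own for this lemma and imports it from \cite[Theorem 1.2]{mischler2025quantitativestabilityoptimaltransport}, so citing it is all that is needed. One caution about your optional reproof sketch: Step 2 is false for $1<p<2$. Take $\lambda=\mu$ uniform on $[0,1]$ and $\nu$ its translate by $\epsilon$. Then $W_1(\mu,\nu)=\epsilon$ and the deficit is $\mathcal{K}_\mu(\psi_\mu)-\mathcal{K}_\mu(\psi_\nu)=\epsilon^p(p-1)/p+O(\epsilon^{p+1})$, while $\|\varphi_\mu-\varphi_\nu\|_{L^2(\lambda)}=\epsilon^{p-1}/\sqrt{12}$. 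So the deficit is only of order $\|\varphi_\mu-\varphi_\nu\|^{p/(p-1)}$, not $\|\varphi_\mu-\varphi_\nu\|^2$, and your claimed bound $W_1^{1/2}$ on the potentials fails for $p<3/2$; hence the difficulty for $p<2$ already appears at the level of the potentials, not only in the semiconcavity step.
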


Applying \Cref{lem:main-map} for the right hand side of \eqref{eq: linearized OT}, $W_p(\mathcal{S}(\rho; \bm{\mu}), \mathcal{S}(\rho; \bm{\nu}))$ is bounded from above by the distance between $\bm{\mu}$ and $\bm{\nu}$ with the exponent $\theta(p)$.

\begin{lemma}\label{lemma: stability result}
Fix $1 < p \leq 2$. Assume that $\rho$ is an absolutely continuous probability measure on $\mathcal{X}_1 \times \mathcal{X}_2$, which is bounded and convex support, with density bounded from above and below by strictly positive constants. Also, assume $\partial (\mathcal{X}_1 \times \mathcal{X}_2)$ is negligible. Then, there is $\theta(p)$ given in \eqref{eq: theta range} such that for any $1 \leq q \leq \infty$,
\begin{equation*}\label{eq: stability result I}
    W_p(\mathcal{S}(\rho; \bm{\mu}), \mathcal{S}(\rho; \bm{\nu})) \leq C \left( W_q(\mu^1, \nu^1)^{\theta(p)} + W_q(\mu^2, \nu^2)^{\theta(p)} \right)
\end{equation*}
where $C=C(\rho,  \mathcal{X}_1 \times \mathcal{X}_2, p, \theta) < \infty$ is independent of $\bm{\mu}, \bm{\nu}$.
\end{lemma}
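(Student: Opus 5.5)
Starting from \eqref{eq: linearized OT}, which relies on \Cref{lemma: uniqueness transport map} to write both shadows as pushforwards of $\rho$ under the product of the optimal maps, I will split the $L^p(\rho)$ norm coordinatewise and reduce everything to one-dimensional-marginal stability of maps. Since $d_{\bm{\mathcal{X}},p}^p$ is separable,
\[
\| (T^{\rho_1 \to \mu_1},T^{\rho_2 \to \mu_2}) - (T^{\rho_1 \to \nu_1},T^{\rho_2 \to \nu_2}) \|_{L^p(\rho)}^p = \sum_{i=1}^2 \| T^{\rho_i \to \mu_i} - T^{\rho_i \to \nu_i}\|_{L^p(\rho_i)}^p ,
\]
because $T^{\rho_i\to\cdot}$ depends only on $y_i$ and the $i$-th marginal of $\rho$ is $\rho_i$. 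Then $(a+b)^{1/p}\le a^{1/p}+b^{1/p}$ gives $W_p(\mathcal{S}(\rho; \bm{\mu}), \mathcal{S}(\rho; \bm{\nu})) \le \sum_i \| T^{\rho_i \to \mu_i} - T^{\rho_i \to \nu_i}\|_{L^p(\rho_i)}$.

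Next I check that each $\rho_i$ satisfies the hypotheses of \Cref{lem:main-map} with $\lambda=\rho_i$ and $\mathcal{Y}=\mathcal{X}_i$. The support of $\rho_i$ is the projection of the convex compact set $\mathcal{X}_1\times\mathcal{X}_2$, namely $\mathcal{X}_i$, which is convex and compact. Its density $\rho_i(y_i)=\int \rho(y_1,y_2)\,dy_{j}$ is bounded above by $\|\rho\|_\infty|\mathcal{X}_j|$ and below by $(\inf\rho)\,|\mathcal{X}_j|$ on $\mathcal{X}_i$. Here I use that $\mathcal{X}_j$ has positive Lebesgue measure, which follows from $\rho$ being absolutely continuous with a density bounded below on a full-measure support. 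The measures $\mu_i,\nu_i$ are supported in the compact set $\mathcal{X}_i$.

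The main obstacle is that \Cref{lem:main-map} controls the $L^2(\lambda)$ norm rather than the $L^p$ norm, and $W_1$ rather than $W_q$. Both are handled by the restriction $p\le 2$ and by compactness. Since $\rho_i$ is a probability measure and $p\le 2$, Hölder's (Jensen's) inequality gives $\|f\|_{L^p(\rho_i)}\le\|f\|_{L^2(\rho_i)}$; this is exactly why the lemma is stated only for $1<p\le 2$. Similarly, $W_1(\mu_i,\nu_i)\le W_q(\mu_i,\nu_i)$ for every $q\ge1$, including $q=\infty$, by monotonicity of Wasserstein distances. Norm equivalence on $\mathbb{R}^{d_i}$ accounts for the $\ell_p$ versus Euclidean ground metric and contributes only a dimensional constant, which is absorbed into $C$.

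Combining these steps gives
\[
W_p(\mathcal{S}(\rho; \bm{\mu}), \mathcal{S}(\rho; \bm{\nu}))\le \sum_{i=1}^2 C(\rho_i,\mathcal{X}_i,p,\theta)\,W_q(\mu_i,\nu_i)^{\theta(p)}.
\]
Taking $C$ as the maximum of the two constants, which depend only on $\rho$, $\mathcal{X}_1\times\mathcal{X}_2$, $p$ and $\theta$, yields the claim, with $\theta(p)$ as in \eqref{eq: theta range}.
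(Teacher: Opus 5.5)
Your argument follows the paper's proof: split \eqref{eq: linearized OT} coordinatewise, use $\|\cdot\|_{L^p(\rho_i)}\le\|\cdot\|_{L^2(\rho_i)}$ for $p\le 2$, apply \Cref{lem:main-map} on each factor, and finish with $W_1\le W_q$ and subadditivity of $t\mapsto t^{1/p}$. You apply the subadditivity step earlier, which makes no difference.

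Your check that each $\rho_i$ satisfies the hypotheses of \Cref{lem:main-map} is something the paper skips, and it is correct under your reading that the density of $\rho$ is bounded below on all of $\mathcal{X}_1\times\mathcal{X}_2$. If the support were a non-product convex set, for example a triangle, the marginal densities would degenerate at the boundary.

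One justification does not hold up: the norm-equivalence remark. Norm equivalence only converts the pointwise norms and the ground metric in $W_1$. It does not identify the maps $T^{\rho_i\to\mu_i}$, which are optimal for the separable cost $\|x-y\|_p^p$, with the optimal maps for the usual $W_p$ on $\mathbb{R}^{d_i}$ that \Cref{lem:main-map} is about. These maps differ in general when $d_i\ge 2$ and $p<2$, so no constant can absorb the discrepancy. That step needs either a version of the map-stability theorem for the $\ell_p^p$ cost or Euclidean metrics on the factors. The paper's proof makes the same identification without comment.
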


\begin{proof}
Since $W_r \leq W_p$ and $L^p(\rho)\subseteq L^r(\rho)$ for $p \geq r$ for a probability measure $\rho$, it follows immediately from \eqref{eq: linearized OT} and \eqref{eq:stability-map} that for $1 < p \leq 2$,
\begin{align*}
     W^p_p(\mathcal{S}(\rho; \bm{\mu}), \mathcal{S}(\rho; \bm{\nu})) & \leq \| (T^{\rho_1 \to \mu_1},T^{\rho_2 \to \mu_2}) - (T^{\rho_1 \to \nu_1},T^{\rho_2 \to \nu_2}) \|_{L^p(\rho)}^p\\
     &= \| T^{\rho_1 \to \mu_1} - T^{\rho_1 \to \nu_1}\|_{L^p(\rho_1)}^p + \| T^{\rho_1 \to \mu^2} - T^{\rho_1 \to \nu^2}\|_{L^p(\rho_2)}^p\\
     &\leq \| T^{\rho_1 \to \mu_1} - T^{\rho_1 \to \nu_1}\|_{L^2(\rho_1)}^p + \| T^{\rho_1 \to \mu^2} - T^{\rho_1 \to \nu^2}\|_{L^2(\rho_2)}^p\\
     &\leq C \left( W_1(\mu^1, \nu^1)^{p\theta(p)} + W_1(\mu^2, \nu^2)^{p\theta(p)} \right).
\end{align*}
Using $W_1 \leq W_q$ for $q \geq 1$ and $(a+b)^{1/p} \leq a^{1/p} + b^{1/p}$ for $a, b >0$ and $p \geq 1$, therefore, the conclusion follows.
\end{proof}

\begin{remark}
For $p > 2$, \eqref{eq:stability-map} does not provide the desirable result since generally $L^2$ bound does not provide $L^p$ bound for $p > 2$. The proofs of \cite{Quantitative_stability_duke2023, mischler2025quantitativestabilityoptimaltransport} rely on the reverse Poincaré inequality \cite[Proposition 5.11]{delalande_thesis}, which provides the control of the gradients of convex functions by the original ones, hence is called the reverse Poincaré inequality. We believe that it is necessary to develop a new framework to control $L^p$ bound of transport maps of $p$-Wasserstein distance for $p > 2$, which is beyond the scope of the present paper. We leaves it as an open problem.
\end{remark}

\subsection{Proofs for Theorem~\ref{theorem: stability of projection estimator w.r.t. input}}

Recall the setting of interest. Let $\rho^n$ denote the empirical distribution based on $n$ i.i.d. samples drawn from $\rho$. Similarly, for each $i$, let $\mu_i^m$ denote the empirical distribution based on $m$ i.i.d. samples drawn from $\mu_i$, and let $\bm{\mu}^m$ denote the collection of these empirical measures. The goal is to establish the stability between $\mathcal{S}(\rho; \bm{\mu})$ and $\mathcal{S}(\rho^n; \bm{\mu}^m)$. Using the triangle inequality again, we have
\[
    W_p(\mathcal{S}(\rho; \bm{\mu}), \mathcal{S}(\rho^n, \bm{\mu}^m)) \leq W_p(\mathcal{S}(\rho; \bm{\mu}^m), \mathcal{S}(\rho^n, \bm{\mu}^m)) + W_p(\mathcal{S}(\rho; \bm{\mu}), \mathcal{S}(\rho, \bm{\mu}^m)).
\]
The second term on the right-hand side can also be controlled by \Cref{lemma: stability result}. Therefore, the problem reduces to controlling the first term. The difficulty is that $\rho^n$ is not absolutely continuous.

Consider a more general setting in which neither of the probability measures $\rho$ nor $\xi$ is absolutely continuous. Fix an absolutely continuous kernel $\gamma^\sigma$, supported on a centered ball of radius $\sigma$, that converges weakly to $\delta_0$ as $\sigma \to 0$. Define $\rho^\sigma := \rho * \gamma^\sigma$ and $\xi^\sigma := \xi * \gamma^\sigma$, both of which are absolutely continuous for any $\sigma > 0$. Then there exist well-behaved shadows of $\rho$ and $\xi$ onto a fixed $\Pi(\bm{\mu})$, obtained as limits of the corresponding shadows of $\rho^\sigma$ and $\xi^\sigma$, respectively.

\begin{proposition}\label{prop:nonexpansive general}
Fix $p \in (1,\infty)$. For any probability measures $\rho$ and $\xi$, there are $\mathcal{S}(\rho; \bm{\mu})$ and $\mathcal{S}(\xi; \bm{\mu})$, shadows of $\rho$ and $\xi$ onto $\Pi(\bm{\mu})$, respectively, such that 
\begin{equation}\label{eq: stability result II}
    W_p(\mathcal{S}(\rho; \bm{\mu}), \mathcal{S}(\xi; \bm{\mu})) \leq W_p(\rho,  \xi).
\end{equation}

\begin{proof}
Let $\rho^\sigma := \rho * \gamma^\sigma$ and $\xi^\sigma := \xi * \gamma^\sigma$. By \Cref{lem:nonexpansive},
\begin{equation}\label{eq: non_expansive 1}
    W_p(\mathcal{S}(\rho^\sigma; \bm{\mu}), \mathcal{S}(\xi^\sigma; \bm{\mu})) \leq W_p(\rho^\sigma,  \xi^\sigma).
\end{equation}

Let us focus on the sequence of $\{ \mathcal{S}(\rho^{\sigma_n}; \bm{\mu})\}_{n}$ where $\sigma_n \to 0$ as $n \to \infty$. Recalling \Cref{lemma: uniqueness transport map}, for each $n$ consider the optimal plan $\pi_n=(\mathrm{Id}, (T^{\rho_1^{\sigma_n} \to \mu_1}, T^{\rho_2^{\sigma_n} \to \mu_2}))_\#\rho^{\sigma_n}$ between $\rho^{\sigma_n}$ and $\mathcal{S}(\rho^{\sigma_n}; \bm{\mu})$. Since 
\[
    \sup_n \min_{\pi \in \Pi(\bm{\mu})} W_p(\rho^{\sigma_n}, \pi) =\sup_n W_p(\rho^{\sigma_n}, \mathcal{S}(\rho^{\sigma_n}; \bm{\mu}))  < \infty,
\]
by \cite[Theorem 5.20]{villani2008optimal}, there are a subsequence of $\{ (\mathrm{Id},T^{\rho_1^{\sigma_n} \to \mu_1})_\# \rho_1^{\sigma_n}  \}$ which converges to $\pi_1$ and that of $\{ (\mathrm{Id},T^{\rho_2^{\sigma_n} \to \mu_2})_\# \rho_2^{\sigma_n} \}$ which converges to $\pi_2$ such that $\pi_1$ and $\pi_2$ achieve $W_p(\rho_1, \mu_1)$ and $W_p(\rho_2, \mu_2)$, respectively, by which a shadow $\mathcal{S}(\rho; \bm{\mu})$ is obtained. One can do the same procedure for $\mathcal{S}(\xi; \bm{\mu})$.

Relabeling again, if necessary, without loss of generality we assume that $\mathcal{S}(\rho^{\sigma_n}; \bm{\mu})$ and $\mathcal{S}(\xi^{\sigma_n}; \bm{\mu})$ converge to certain $\mathcal{S}(\rho; \bm{\mu})$ and $\mathcal{S}(\xi; \bm{\mu})$, respectively. For each $n$, let $\gamma_n$ be an optimal coupling achieving $W_p(\mathcal{S}(\rho^{\sigma_n}; \bm{\mu}), \mathcal{S}(\xi^{\sigma_n}; \bm{\mu}))$. Applying \cite[Theorem 5.20]{villani2008optimal} for $\{ \gamma_n \}$, there is a subsequence of $\{ \gamma_n \}$ which converges to $\gamma$, and $\gamma$ achieves $W_p(\mathcal{S}(\rho; \bm{\mu}), \mathcal{S}(\xi; \bm{\mu}))$. Hence, relabeling again suitably, it turns out that there are some shadow of $\rho$ and $\xi$ onto $\Pi(\bm{\mu})$ such that
\begin{equation}\label{eq: non_expansive 2}
    \lim_{n \to \infty} W_p(\mathcal{S}(\rho^{\sigma_n}; \bm{\mu}), \mathcal{S}(\xi^{\sigma_n}; \bm{\mu})) = W_p(\mathcal{S}(\rho; \bm{\mu}), \mathcal{S}(\xi; \bm{\mu})).
\end{equation}

On the other hand, by \cite[Lemma 5.2]{OT_for_applied}, it holds that
\begin{equation}\label{eq: non_expansive 3}
    W_p(\rho^\sigma,  \xi^\sigma) \leq W_p(\rho,  \xi).
\end{equation}
Combining \eqref{eq: non_expansive 1}, \eqref{eq: non_expansive 2} and \eqref{eq: non_expansive 3}, therefore, the conclusion follows.
\end{proof}
\end{proposition}

\begin{remark}
It should be emphasized that \Cref{prop:nonexpansive general} asserts the existence of shadows of $\rho$ and $\xi$ that satisfy \eqref{eq: stability result II}. In general, neither all shadows nor all projections satisfy \eqref{eq: stability result II}.
\end{remark}

\bibliographystyle{plainnat}
\bibliography{references.bib}

\end{document}